\documentclass[11pt, oneside]{article}  
\usepackage{geometry}   
\geometry{letterpaper} 
\usepackage{graphicx}				
\usepackage{epsfig}	
								
\usepackage{tikz}
\usepackage{amssymb, amsmath}
\newcommand{\vx}{{\bf x}}
\newcommand{\bj}{{\bf j}}
\newcommand{\bv}{{\bf v}}
\newcommand{\bn}{{\bf n}}
\newcommand{\f}{{f}}
\newcommand{\du}{{\partial_u}}
\newcommand{\dv}{{\partial_v}}

\newcommand{\bbR}{{\mathbb R}}
\newcommand{\bbC}{{\mathbb C}}

\newcommand{\calM}{{\mathcal M}}
\newcommand{\calH}{{\mathcal H}}

\newtheorem{definition}{Definition}
\newtheorem{remark}{Remark}
\newtheorem{lemma}{Lemma}
\newenvironment{proof}[1][Proof]{\begin{trivlist}
\item[\hskip \labelsep {\bfseries #1}]}{\end{trivlist}}

\title{Pseudo-spectral methods for the Laplace-Beltrami equation
and the Hodge decomposition on surfaces of genus one\footnote{This  work  was  supported  in  part  by  the  Applied  Mathematical  Sciences  Program  of  the
U.S. Department of Energy under contract DEFGO288ER25053 and by the Office of the Assistant
Secretary  of  Defense  for  Research  and  Engineering  and  AFOSR  under  NSSEFF  program  award
FA9550-10-1-0180}}
\author{Lise-Marie Imbert-Gerard 
\footnote{Courant Institute, New York University, New
    York, NY. Email: {\tt imbertgerard@cims.nyu.edu}}, \, and
Leslie Greengard\footnote{Courant Institute, New York University,
  and Simons Center for Data Analysis, Simons Foundation, New York, NY. Email:
  {\tt greengard@cims.nyu.edu}}
}
\date{\today}

\begin{document}

\maketitle
\begin{abstract}
The inversion of the Laplace-Beltrami operator and the computation of 
the Hodge decomposition of a tangential vector field on smooth surfaces
arise as computational tasks in many areas of science,
from computer graphics to machine learning to computational
physics. Here, we present a high-order accurate pseudo-spectral
approach, applicable to closed surfaces of genus one 
in three dimensional space, with a view toward applications in plasma
physics and fluid dynamics. 
\end{abstract}


\section{Introduction}

Partial differential equations on smooth manifolds
\cite{Jost,ROSENBERG97,Schwarz}
arise in many areas of science, including
graphics and image analysis
\cite{Chen2015,Fisher2007,GuYau02,CGF2025,Petronetto2010,Sapiro,Xu2009391},
machine learning \cite{BELKIN2008,Coifman2005}, 
plasma
physics and fluid dynamics 
\cite{BOOZER1999,PLASMACOORDS,Duduchava2006,GARABEDIAN,Marsh}, 
and electromagnetic theory
\cite{Chernokozhin2013,Colton,EG10,EGO13,EGO15,GROSSKOTIUGA,Nedelec}.
In some cases, the problem of interest concerns a
physical process such as surface growth or surface diffusion 
\cite{Ruuth2008,Schwartz2005}.
In others, the primary concern is with the representation of 
vector fields on the surface, such as currents which give rise to
electromagnetic fields in the ambient space. In the latter context,
it is known from Hodge theory \cite{Schwarz,Warner}
that one can express any vector field $\bj$ on a smooth
closed multiply connected surface $\Gamma$ in $\bbR^3$ in terms 
of an orthogonal decomposition of the form:
\begin{equation}
\label{helmdecompmc}
\bj =  \nabla_{\Gamma} \alpha + \bn \times \nabla_{\Gamma} \beta + \bj_H
\end{equation}
for some scalar functions $\alpha, \beta$,
where $\bn$ is the surface normal,
$\nabla_{\Gamma}$ denotes the surface gradient on $\Gamma$, and
$\bj_H$  is both surface divergence-free and surface curl-free, namely
\begin{equation} \label{harmdef}
 Div \, \bj_H =  \nabla_{\Gamma} \cdot \bj_H = 0, \quad
 Curl \, \bj_H = \nabla_{\Gamma} \cdot (\bn \times \bj_H) = 0. 
\end{equation}
These operators are defined formally in terms of the metric
tensor in the next section. For the moment, we simply note that
$Curl \, \bj$ is defined as a scalar field, corresponding to the
normal component of the three dimensional curl
of a suitable extension of the tangential vector field $\bj$ 
in a neighborhood of the
surface \cite{Colton,Nedelec}.
Vector fields satisfying both equations in \eqref{harmdef} are called 
{\em harmonic vector fields} 
and, from Hodge theory, the space of such functions is of dimension
exactly $2\mathfrak g$ for a surface of genus $\mathfrak g$ \cite{Schwarz,Warner}.
We also note (for later reference) that
$Curl \, (\nabla_{\Gamma} \alpha) =0$ and 
$Div \, (\bn \times \nabla_{\Gamma} \beta) =0$.

Despite their importance, differential equations on manifolds have 
received much less attention
from the scientific computing community than
boundary value problems in Euclidean space. 
For some sense of the available literature, we refer the reader to 
\cite{Arnold2010,Dziuk2013,Olshanskii2009} 
and the references therein for a discussion of 
finite element analysis, 
\cite{ADALSTEINSSON1997,BertalmioOsher,HUISKAMP1991,Macdonald2011,Ruuth2008}
for a discussion of finite difference methods, and 
\cite{KN2014} for a discussion of integral equation methods. 

Here, we restrict our attention to smooth, closed
surfaces $\Gamma$ of genus one embedded in $\bbR^3$, primarily because 
of their importance in plasma
physics and stellarator design \cite{garabedian_stell}. More 
precisely, we assume that $\Gamma$ 
is a smooth toroidal surface:
\begin{equation}\label{eq:defX}
\vx(u,v) = 
( x(u,v), y(u,v), z(u,v) ),
\end{equation}
where $\vx(u,v)$ is a doubly periodic
function of $(u,v)\in [0,2\pi]^2$. 
In the case of a surface of revolution,
this takes the special form
\begin{equation}\label{eq:SR}
\vx(u,v) = (r(u)\cos v, r(u)\sin v, z(u)).
\end{equation}
We will refer, in either setting, to 
$u$ as the {\em poloidal} angle and to $v$ as the {\em toroidal} angle.

A smooth real-valued scalar function
$f(u,v)$ defined on $\Gamma$ is periodic, and it will
be convenient to write $f$ in the form of a Fourier series:
\[
f(u,v) 
= \sum_{m=-\infty}^{\infty} \sum_{n=-\infty}^{\infty}
f_{mn} e^{\imath (mu+nv)}.
\]
A smooth, real-valued tangential vector field 
$\mathbf F$ on the
surface defined by \eqref{eq:defX} is generally expressed in
terms of the standard, contravariant components in the standard, covariant basis:
\begin{equation}\label{eq:Fbasis}
{\mathbf F}(u,v) = F^1(u,v)\du \vx(u,v) + F^2(u,v)\dv \vx(u,v).
\end{equation}
For simplicity, we will often write ${\mathbf F} = (F^1,F^2)$.
Since 
$\mathbf F : [0,2\pi[^2 \rightarrow \bbR^3$ 
is periodic, it can be
represented as a Fourier series of the form
\[
\mathbf F(u,v) 
= \sum_{m=-\infty}^{\infty} \sum_{n=-\infty}^{\infty}
\bigg(F^1_{mn} \du \vx(u,v) + F^2_{mn}\dv \vx(u,v)\bigg) e^{\imath (mu+nv)}.
\]
The space of periodic tangential vector fields 
will be denoted $\mathcal V_{\Gamma}$ and
the subspace of $\mathcal V_{\Gamma}$ consisting of $k$-times 
differentiable functions will be denoted by 
$\mathcal V_{\Gamma}^k$. Similarly, the space of periodic 
scalar functions will be denoted by ${\mathcal S}_\Gamma$,
and its subspace of $k$-times differentiable functions by 
${\mathcal S}^k_\Gamma$.

In this paper, we construct efficient, high-order pseudospectral methods for 
evaluating the surface divergence, gradient and curl
(section \ref{diffgeo}),
inverting the Laplace-Beltrami operator 
(section \ref{surflap}),
determining a basis for surface harmonic vector fields
(section \ref{harmvec}), and 
computing the Hodge decomposition
(section \ref{hodge}).
Section \ref{numex} contains some illustrative numerical 
examples and section \ref{discussion} contains some concluding
remarks.

\section{Mathematical preliminaries} \label{diffgeo}

This section is dedicated to the definition and discretization of  $L^2$ inner products of scalar and vector-valued functions, and partial differential operators on a surface of genus $1$ in $\mathbb R^3$.
\subsection{Surface operators}
For a surface $\Gamma$ defined by a parametrization \eqref{eq:defX}, we will denote the 
metric tensor by 
\[
 G = \begin{pmatrix}
G_{uu} & G_{uv} \\ G_{uv} & G_{vv}
\end{pmatrix}
\] 
where
\[
G_{uu} = \du \vx\cdot\du \vx, \quad
G_{uv} = \du \vx\cdot\dv \vx,\quad \text{and}\quad
G_{vv} = \dv \vx\cdot\dv \vx.
\] 
and the determinant of $G$ by $g$. By the Cauchy-Schwarz inequality, it 
is clear that $g\geq 0$. We assume $g>0$, so that 
$(\du\vx,\dv\vx)$ is a basis for the tangent space to the surface at every 
point under the parametrization \eqref{eq:defX}.
The induced $L_2$ inner product for tangential vector fields $\bv_1,\bv_2$
is
\begin{equation} \label{l2vecdef}
\langle \bv_1,\bv_2 \rangle_\Gamma = 
\int_\Gamma \bv_1(\vx(u,v)) \cdot \bv_2(\vx(u,v)) dA(u,v) = 
\int_0^{2\pi} \int_0^{2\pi}
 \bv_1(u,v) \cdot \bv_2(u,v) \sqrt{g(u,v)} \, du \, dv .
\end{equation}
The induced $L_2$ inner product for scalar fields $\phi,\psi$
is
\begin{equation} \label{l2def}
\langle \phi,\psi \rangle_\Gamma = 
\int_\Gamma \phi(\vx(u,v)) \, \psi(\vx(u,v)) dA(u,v) = 
\int_0^{2\pi} \int_0^{2\pi}
 \phi(u,v) \, \psi(u,v) \sqrt{g(u,v)} \, du \, dv 
\end{equation}
The surface normal is given by the unit vector
$\mathbf n = \frac{1}{\sqrt{g}}(\du \vx \times \dv \vx)$.

\begin{definition} \label{defops}
Assume the surface $\Gamma$ is such that $G_{uu}$, $G_{uv}$ and $G_{vv}$ are in $\mathcal S_\Gamma^1$, and that $g\neq 0$.
Let $f(u,v)$ denote a scalar function and let 
${\mathbf F}(u,v)$ denote a tangential vector field on the surface $\Gamma$.
Then the surface gradient, divergence, curl and Laplace-Beltrami
operator are given by \cite{Colton,Nedelec}:
\begin{align*}
Grad \, f &= \nabla_\Gamma f
   =  \left[\frac{G_{vv}}{g} \du f - \frac{G_{uv}}{g} \dv f \right] \du \vx 
+ \left[-\frac{G_{uv}}{g} \du f + \frac{G_{uu}}{g} \dv f \right] \dv \vx ,
\forall f\in{\mathcal S}^1_\Gamma,\\
Div \, \mathbf F &= \nabla_\Gamma \cdot \mathbf F = 
\frac {1}{\sqrt g} \big[\du\left(\sqrt{ g}F^1\right) +\dv\left(\sqrt{ g}F^2\right) \big] \, ,
\forall \mathbf F\in{\mathcal V}^1_\Gamma , \\
Curl \, \mathbf F &= - \nabla_\Gamma \cdot (\mathbf n \times \mathbf F) =
\frac 1{\sqrt g}\big(\du\left(G_{uv}F^1 + G_{vv}F^2\right) 
 -\dv\left( G_{uu}F^1 + G_{uv}F^2\right) \big) \, ,
 \forall \mathbf F\in{\mathcal V}^1_\Gamma, \\
\Delta_\Gamma f &= \nabla_\Gamma \cdot ( \nabla_\Gamma f) 
=\frac{1}{\sqrt g} \left[
 \du \left(\frac{G_{vv}}{\sqrt g}\du f
          -\frac{G_{uv}}{\sqrt g}\dv f\right) 
+\dv \left(\frac{G_{uu}}{\sqrt g}\dv f
          -\frac{G_{uv}}{\sqrt g}\du f\right) \right]\, ,
\forall f\in{\mathcal S}^2_\Gamma,
\end{align*}
respectively.
\end{definition}

 
\subsection{Discretization} \label{sec:discr}
In order to develop high-order
pseudospectral methods, we assume that scalar 
functions, such as $f$ or $F^1$, are
discretized on a uniform grid with $N^2$ points:
\[ \vec{f} = \{ f(u_k,v_l) \}, \ 
\vec{F}^1 = \left\{ F^1(u_k,v_l) \right\}, 
\]
where
\[ h = \frac{2\pi}{N}, \quad
u_k = k h, \quad
v_l = l h,\quad k,l=1,\dots,N.
\]
We approximate the Fourier coefficients
\[
\left[\mathcal F(f)\right]_{mn} \equiv
\frac{1}{(2\pi)^2}
\int_0^{2\pi}
\int_0^{2\pi}
f(u,v)e^{-\imath(mu+nv)}du\,dv
\]
by the discrete Fourier transform 
$\widehat{f} = \mathcal F_{h}\left(\vec{f}\,\right)$:
\[
\widehat{f}_{mn} =
\left(\frac{h}{2\pi}\right)^2\sum_{k=1}^N \sum_{l=1}^N 
f(u_k,v_l)e^{-\imath(mu_k+nv_l)}\, .
\]
With a slight abuse of notation, we assume that 
the set of discrete values on the surface, namely $\vec{f} \in {\bbR}^{N^2}$, and
the corresponding discrete Fourier transform, 
$\widehat{f} \in \bbC^{N^2}$, 
have been ``unrolled" and can be considered
column vectors of length $N^2$. Likewise, we will view
$\left(\vec{F}^1,  \vec{F}^2 \right)$ or
$\left( \vec{F}^1, \vec{F}^2 \right)^T$
as column or row vectors in $\bbR^{2N^2}$.

\begin{definition} \label{defops2}
Given the Fourier coefficient vector $\widehat{f}$, we will denote by
$\mathcal D_{\sigma(n,m)}$ the (Fourier space)
diagonal operator such that 
\[ \left[\mathcal D_{\sigma(n,m)}\widehat f\,\right]_{mn} = \sigma(n,m) \, \widehat f_{mn}\, . \]
Thus, $ \left[\mathcal D_{im}\widehat f\,\right]_{mn} = im \, \widehat f_{mn}$, etc.

Given the vector $\vec{\f}$, we will denote by
$D_{ug}$ and $D_{vg}$ the (physical space) diagonal operators such that 
\[ \left[D_{ug}\vec{f}\,\right](u_k,v_l) =
\frac{\partial_u g}{2g}(u_k,v_l) \f(u_k,v_l),\quad
 \left[D_{vg}\vec{f}\,\right](u_k,v_l) =
\frac{\partial_v g}{2g}(u_k,v_l) \f(u_k,v_l).
\]

We will denote by $D_{uu}$, $D_{uug}$, $D_{uv}$, $D_{uvg}$, 
$D_{vv}$, and $D_{vvg}$,
the diagonal operators such that 
\[  \left[D_{uu} \vec{f}\,\right](u_k,v_l) =
\frac{G_{uu}}{g}(u_k,v_l) \f(u_k,v_l), \quad
 \left[ D_{uug} \vec{f}\,\right](u_k,v_l) =
\frac{G_{uu}}{\sqrt{g}}(u_k,v_l) \f(u_k,v_l),
\]
\[  \left[D_{uv} \vec{f}\,\right](u_k,v_l) =
-\frac{G_{uv}}{g}(u_k,v_l) \f(u_k,v_l), \quad
  \left[D_{uvg} \vec{f}\,\right](u_k,v_l) =
\frac{G_{uv}}{\sqrt{g}}(u_k,v_l) \f(u_k,v_l),
\]
\[  \left[D_{vv} \vec{f}\,\right](u_k,v_l) =
\frac{G_{vv}}{g}(u_k,v_l) \f(u_k,v_l), \quad
  \left[D_{vvg} \vec{f}\,\right](u_k,v_l) =
\frac{G_{vv}}{\sqrt{g}}(u_k,v_l) \f(u_k,v_l).
\]

We will denote by $D_{{u,uv}}$, $D_{{v,uv}}$, 
$D_{{v,uu}}$, and $D_{{u,vv}}$, 
the (physical space) diagonal operators such that 
\[  \left[D_{{u,uv}} \vec{f}\,\right](u_k,v_l) =
\frac{\partial_u G_{uv}}{\sqrt{g}}(u_k,v_l) \f(u_k,v_l),
\quad
  \left[D_{{v,uv}} \vec{f}\,\right](u_k,v_l) =
\frac{\partial_v G_{uv}}{\sqrt{g}}(u_k,v_l) \f(u_k,v_l),
\]
\[  \left[D_{{v,uu}} \vec{f}\,\right](u_k,v_l) =
\frac{\partial_v G_{uu}}{\sqrt{g}}(u_k,v_l) \f(u_k,v_l),
\quad
  \left[D_{{u,vv}} \vec{f}\,\right](u_k,v_l) =
\frac{\partial_u G_{vv}}{\sqrt{g}}(u_k,v_l) \f(u_k,v_l),
\]
\end{definition}

\begin{lemma} \label{opdeflemma}
Let $\vec{f}$ denote a discretized scalar function on a surface $\Gamma$
and let $\vec{\mathbf F} = \left(\vec{F^1},\vec{F^2}\right)^T$ denote a 
discretized tangential vector field in the standard basis. 
Then the surface gradient, divergence, and Laplace-Beltrami
operator are approximated by:
\begin{align*}
Grad_h \, \vec{f} &= \nabla_{\Gamma,h} \vec{f}
   = 
\begin{pmatrix} D_{vv} & D_{uv} \\ D_{uv} & D_{uu} \end{pmatrix}
\begin{pmatrix} \mathcal F_{h}^\ast & 0 \\ 0 & 
\mathcal F_h^\ast \end{pmatrix} 
\begin{pmatrix} \mathcal D_{im} \\ 
\mathcal D_{in} \end{pmatrix} 
\mathcal F_{h}   \vec{f}  \\
Div_h \, \vec{\mathbf F} &= 
\nabla_{\Gamma,h} \cdot \vec{\mathbf F} = 
 \begin{pmatrix} V_1(G) & V_2(G) \end{pmatrix}
\begin{pmatrix} \vec{F^1} \\ \vec{F^2} \end{pmatrix} \\
Curl_h \, \vec{\mathbf F} &= \begin{pmatrix} C_1(G) & C_2(G) \end{pmatrix}
\begin{pmatrix} \vec{F^1} \\ \vec{F^2} \end{pmatrix} \\
\Delta_{\Gamma,h} \vec{f} &= Div_h \, Grad_h \vec{f}.
\end{align*}
where
\begin{align*}
V_1(G) &=  D_{ug} + \mathcal F_h^\ast \mathcal D_{im} \mathcal F_h \\
V_2(G) &= D_{vg} +  \mathcal F_h^\ast \mathcal D_{in} \mathcal F_h \\
C_1(G) &= D_{u,uv} - D_{v,uu} +
D_{uvg} \mathcal F_h^\ast \mathcal D_{im} \mathcal F_h -
D_{uug} \mathcal F_h^\ast \mathcal D_{in} \mathcal F_h,
\\
C_2(G) &= D_{u,vv} - D_{v,uv} +
D_{vvg} \mathcal F_h^\ast \mathcal D_{im} \mathcal F_h -
D_{uvg} \mathcal F_h^\ast \mathcal D_{in} \mathcal F_h .
\end{align*}
Assuming $f$, $\mathbf F \in C^{k}$ and the surface $\Gamma \in C^{k+2}$,
all of the above approximations are $k$-th order accurate.
If $f$, $\mathbf F$ and $\Gamma$ are infinitely differentiable, then 
the convergence rate is superalgebraic.
\end{lemma}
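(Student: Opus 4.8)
The plan is to split the statement into an exact algebraic part, which identifies the matrix formulas with the operators of Definition \ref{defops}, and an error-analysis part, which reduces all four accuracy claims to the single question of how well Fourier pseudospectral differentiation approximates $\du$ and $\dv$. For the algebraic part I would expand each continuous operator by the product rule and read off its variable coefficients. For the divergence, $\frac{1}{\sqrt g}\du(\sqrt g F^1) = \frac{\du g}{2g}F^1 + \du F^1$ together with its $v$-analogue matches $V_1(G) = D_{ug} + \mathcal F_h^\ast\mathcal D_{im}\mathcal F_h$ and $V_2(G) = D_{vg} + \mathcal F_h^\ast\mathcal D_{in}\mathcal F_h$ once $\du$ is replaced by $\mathcal F_h^\ast\mathcal D_{im}\mathcal F_h$ and $\frac{\du g}{2g}$ by the diagonal operator $D_{ug}$ of Definition \ref{defops2}. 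The same expansion applied to $Curl\,\mathbf F$ produces, in the $F^1$ block, exactly the coefficients $\frac{\du G_{uv}}{\sqrt g}$, $\frac{\dv G_{uu}}{\sqrt g}$, $\frac{G_{uv}}{\sqrt g}$, and $\frac{G_{uu}}{\sqrt g}$ multiplying $F^1$ and its derivatives, which are the diagonal operators assembled in $C_1(G)$, and analogously for $C_2(G)$; the gradient is read off directly from its defining formula. Thus each matrix formula is the continuous operator with every $\du,\dv$ replaced by pseudospectral differentiation and every metric coefficient replaced by an exact pointwise multiplication, while the Laplacian needs no separate identity, being defined as $Div_h\,Grad_h$.

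Next I would reduce the accuracy claim to the error of a single pseudospectral differentiation. Because the diagonal operators reproduce the metric coefficients exactly at the grid points, and because these coefficients are continuous on the compact torus with $g>0$ and hence uniformly bounded, the grid-point error of $Grad_h$, $Div_h$, and $Curl_h$ is in each case a finite linear combination — with bounded coefficients depending only on $\Gamma$ — of the pointwise errors $\big(\mathcal F_h^\ast\mathcal D_{im}\mathcal F_h\,\vec f - \du f\big)$ and its $v$-analogue, applied to $f$ or to the components $F^i$. It therefore suffices to bound this single-derivative error. I would use the standard aliasing decomposition: $\mathcal F_h^\ast\mathcal D_{im}\mathcal F_h\,\vec f$ equals the grid values of the derivative of the trigonometric interpolant of $f$, and the grid-point error is the aliasing sum $-\sum_{|m|\le N/2}\sum_{l\ne 0} i\,lN\,\hat f_{m+lN}\,e^{\imath m u_k}$. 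Bounding this by the decay of the Fourier coefficients yields $O(h^k)$ when $f\in C^k$ and superalgebraic decay when $f\in C^\infty$, while the matching hypothesis $\Gamma\in C^{k+2}$ ensures that all metric quantities appearing as diagonal operators (which involve first derivatives of $\vx$) are themselves at least $C^k$.

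The delicate point, and the step I expect to be the main obstacle, is the Laplacian. Since $\Delta_{\Gamma,h} = Div_h\,Grad_h$, the second pseudospectral differentiation inside $Div_h$ acts not on a trigonometric polynomial but on a product of a metric coefficient with the derivative-of-interpolant produced by $Grad_h$, and, more seriously, on a quantity that is itself only an $O(h^k)$ approximation to the exact gradient. Spectral differentiation can amplify high-frequency content by a factor as large as $N\sim h^{-1}$, so a naive bound would lose an order. The crux is to show this loss does not occur: I would track the first-stage error through its own trigonometric interpolant and re-apply the aliasing estimate to the composite, arguing that the two extra orders of surface smoothness ($\Gamma\in C^{k+2}$ rather than $C^k$) are precisely what is needed to absorb the second differentiation and the coefficient products while retaining overall $k$-th order accuracy, with the $C^\infty$ superalgebraic case following in the same way from the rapid decay of every Fourier coefficient involved.
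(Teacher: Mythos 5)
Your proposal is correct and follows essentially the same route as the paper, whose entire proof consists of the two sentences that the formulas ``are obtained directly from Definitions \ref{defops} and \ref{defops2}'' and that the convergence rate follows from ``well-known properties of the discrete Fourier transform.'' Your product-rule identification of the coefficient operators and your reduction of the error to the standard aliasing estimate for pseudospectral differentiation simply supply the details the paper omits; in particular, your flagging of the composition $Div_h\,Grad_h$ as the delicate step (where spectral differentiation of the first-stage error could cost an order, absorbed by the $\Gamma\in C^{k+2}$ hypothesis) is a point the paper passes over entirely.
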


\begin{proof}
The formulas are obtained directly from 
Defintions \ref{defops} and \ref{defops2}. 
The convergence rate is easily established
from the corresponding, well-known properties of the discrete 
Fourier transform.
\end{proof}

Finally, the $L^2$ inner product $\langle f_1,f_2\rangle_\Gamma$ 
is computed by
\[
\langle\vec{f_1},\vec{f_2}\rangle_{\Gamma,h} = \sum_{k=1}^N\sum_{l=1}^N f_1(u_k,v_l)f_2(u_k,v_l) \sqrt{g(u_k,v_l)}.
\]

\section{The Laplace-Beltrami equation} \label{surflap}

Let us now consider the problem of solving the Laplace-Beltrami
equation
\begin{equation}
\label{lapeq}
 \Delta_\Gamma \phi = b
\end{equation}
on a toroidal surface $\Gamma$. 
For this, we first
define $\calM_\Gamma$, the space of mean-zero functions on
$\Gamma$:
\[ \calM_\Gamma = \{ f: \Gamma \rightarrow \bbR \, | \,
\langle f,e \rangle_\Gamma =  0 \},
\]
where $e(u,v)$ is the constant function $e(u,v) = 1$.

It is well-known that \eqref{lapeq} is rank-one deficient, but
invertible as a map from $\calM_\Gamma$ to $\calM_\Gamma$
\cite{Jost,ROSENBERG97}. In order to formulate the
problem in terms of an invertible linear system, we replace
\eqref{lapeq} with
\begin{equation}
\label{lapeqstab}
 \Delta_\Gamma \phi + \langle e,\phi \rangle_\Gamma \, e = b,
\end{equation}
where, again, $e(u,v)=1$. The left-hand side is easily shown to be
an invertible operator on the space of smooth functions. Moreover,
 \eqref{lapeqstab}
has a unique solution if the right-hand side $b \in \calM_\Gamma$. 
To see this, note that 
\[
\langle e,\Delta_\Gamma \phi \rangle_\Gamma + \langle e,\phi \rangle_\Gamma \, \langle e,e \rangle_\Gamma = 
\langle e,b \rangle_\Gamma.
\]
Since $\Delta_\Gamma$ is self-adjoint and annihilates constant functions, 
we have 
\[
0 + \langle e,\phi \rangle_\Gamma \, \langle e,e \rangle_\Gamma = 0,
\]
from which $\langle e,\phi \rangle_\Gamma = 0$. 

It remains to develop a finite-dimensional version of \eqref{lapeqstab}.
For this, we assume $\phi$ and $b$ are discretized as in 
section \ref{sec:discr}, and 
let $c(u,v) = \sqrt{g(u,v)}$ with $\vec{c}$ denoting the corresponding
discrete vector. It is then easy to see that the discretization of 
\eqref{lapeqstab} on a uniform mesh takes the form
\begin{equation} \label{lapeqdisc}
\left( \Delta_{\Gamma,h} + \vec{c} \,\vec{c}^{\, T} \right) 
\vec{\phi} = \vec{b}.
\end{equation}
In order to construct a better-conditioned linear system, we
will use the inverse of the ``flat Laplacian" as a preconditioner
and solve
\begin{equation} \label{lapeqdiscp}
\left( \Delta_{I,h} + \vec{1} \, \vec{1}^{\, T} \right)^{-1} 
\left( \Delta_{\Gamma,h} + \vec{c} \, \vec{c}^{\, T} \right) \vec{\phi} = 
\left( \Delta_{I,h} + \vec{1} \, \vec{1}^{\, T}\right)^{-1} 
\vec{b}.
\end{equation}
Here, $\Delta_{I,h}$ is the discrete Laplace-Beltrami operator defined 
above, but for the flat torus, for which the metric tensor
$G$ is the identity operator. The vector $\vec{1}$ denotes the vector of 
length $N^2$, each of whose entries is one.

\begin{remark}
We note that the operator 
$\left( \Delta_{\Gamma,h} + \vec{c} \, \vec{c}^{\, T} \right)$ can be applied
in $O(N^2 \log N)$ operations since all the operators in 
$Div_h$ and $Grad_h$ are either diagonal or involve the discrete
Fourier transform, which can be computed by means of the fast 
Fourier transform
(FFT).
The preconditioner 
$\left( \Delta_{I,h} + \vec{1} \, \vec{1}^{\, T} \right)^{-1}$ can also be 
applied in $O(N^2 \log N)$ operations since the flat Laplacian
$\Delta_{I,h}$ is diagonalized by the Fourier transform and inversion
corresponds to multiplication by $-1/((n^2+m^2)$ in the Fourier
domain, unless $n=m=0$, in which case we multiply by $-1$.
\end{remark}

\section{Harmonic vector fields} \label{harmvec}

As discussed in the introduction, 
harmonic vector fields are tangential vector fields 
$\mathbf F \in \mathcal V_{\Gamma}$ satisfying the system
  \begin{equation} \label{hvfsystem}
  \begin{array}{r}
  Div (\mathbf F) =  \nabla_\Gamma \cdot {\mathbf F} = 0\, \\
  Curl(\mathbf F) =  \nabla_\Gamma \cdot (\bn \times {\mathbf F}) = 0.
  \end{array}
  \end{equation}
Thus, one method for their computation 
is to find the nullspace of \eqref{hvfsystem},
which we will denote by $\calH(\Gamma)$.
  
\begin{remark}
For a surface $\Gamma$ embedded in $\bbR^3$, if 
$\mathbf F \in \calH(\Gamma)$, then 
$\mathbf n \times \mathbf F$ is in $\calH(\Gamma)$ as well. 
This follows from the fact that
$\mathbf n \times(\mathbf n \times \mathbf F) = -\mathbf F$.
Thus, since $\calH(\Gamma)$ is two-dimensional, we need only find a single 
non-trivial element in the nullspace of 
\eqref{hvfsystem}. A second basis vector is then obtained by simply
computing the cross-product of the first with $\mathbf n$. 
\end{remark}

The finite-dimensional version of \eqref{hvfsystem}, using the 
discretization of section \ref{sec:discr}, is
  \begin{equation} \label{hvfdisc}
A \begin{pmatrix} \vec{F}^1 \\ \vec{F}^2 \end{pmatrix}
=
\left( 
\begin{array}{cc} V_1(G) & V_2(G) \\ [.5em]
C_1(G) & C_2(G) \end{array} \right)
\begin{pmatrix} \vec{F}^1 \\ \vec{F}^2 \end{pmatrix}
= \left( 
\begin{array}{cc} 0 \\ [.5em] 0 \end{array} 
\right).
\end{equation}
This is a real $2N^2 \times 2N^2$ system of equations.

\subsection{Computation of the nullspace}

To find a null vector, we use the method of \cite{nullspace}, which 
we very briefly review here. The method consists of replacing
\eqref{hvfsystem}, which we denote by $A \vec{\mathbf F} = 0$, with
  \begin{equation} \label{hvfdiscp}
  \left( A + R \, S^{\, T} \right) 
\begin{pmatrix} \vec{F}^1 \\ \vec{F}^2 \end{pmatrix}
= A 
\begin{pmatrix} \vec{Q}^1 \\ \vec{Q}^2 \end{pmatrix}
  \end{equation}
where $R,S$ are random matrices in $\bbR^{2N^2 \times 2}$,
$\begin{pmatrix} \vec{Q}^1 \\ \vec{Q}^2 \end{pmatrix}$ 
is a random vector in $\bbR^{2N^2}$, and 
$S^T$ denotes the transpose of $S$.

A probabilistic argument \cite{nullspace} shows that
$[A + R \, S^{\, T}]$ is invertible.
After the solution of \eqref{hvfdiscp}, note that 
$A 
\begin{pmatrix} \vec{Q}^1 \\ \vec{Q}^2 \end{pmatrix}$
is in the range of $A$, as is $A 
\begin{pmatrix} \vec{F}^1 \\ \vec{F}^2 \end{pmatrix}$.
Since, with probability one, $R$ is {\em not} in the range of $A$,
it must follow that $S^T
\begin{pmatrix} \vec{F}^1 \\ \vec{F}^2 \end{pmatrix} = 0$.
Hence, $\begin{pmatrix} \vec{F}^1-\vec{Q}^1  \\
\vec{F}^2-\vec{Q}^2  \end{pmatrix}$
is a non-trivial null vector.

In practice, we will solve \eqref{hvfdiscp}
iteratively, using as a preconditioner the 
``flat" approximation to $A^{-1}$, 
namely
  \begin{equation} \label{hvfprec}
\left( 
\begin{array}{cc} V_1(I) & V_2(I) \\ [.5em]
C_1(I) & C_2(I) \end{array} \right)^{\dagger} =
\left(
\begin{array}{cc}
 \mathcal F_h^\ast \mathcal D_{im} \mathcal F_h &
\mathcal F_h^\ast \mathcal D_{in} \mathcal F_h \\ [.2em]
-\mathcal F_h^\ast \mathcal D_{in} \mathcal F_h 
&
 \mathcal F_h^\ast \mathcal D_{im} \mathcal F_h
\end{array} \right)^{\dagger}.
\end{equation}
Here,
$V_1(I), V_2(I), C_1(I)$ and $C_2(I)$ denote the discrete divergence
and curl components 
from Lemma \ref{opdeflemma}, using the identity
operator in place of the metric $G$.
We use the pseudoinverse symbol here ($\dagger$) to indicate that, 
when $D_{in}$ or $D_{im}$ is to be formally inverted, we regularize
the matrix by setting any zero diagonal elements to $1$.
It is straightforward to verify that 
the preconditioner in \eqref{hvfprec} can be applied in 
$O(N^2 \log N)$ operations using the FFT.

To summarize, a basis
$\{ \mathbf h_1, \mathbf h_2 \}$ for the space of 
harmonic vector fields $\calH(\Gamma)$ on a surface $\Gamma$ of genus one
can be computed 
by solving \eqref{hvfdiscp} and letting
$\mathbf h_1 = \begin{pmatrix} \vec{F}^1-\vec{Q}^1  \\
\vec{F}^2-\vec{Q}^2  \end{pmatrix}$.
We then 
set $\mathbf h_2 = \bn \times \mathbf h_1$.

\begin{remark}
For a surface of revolution parametrized as in \eqref{eq:SR},
the metric tensor takes the simpler form
\[ G 
 = \begin{pmatrix}
r'(u)^2 + z'(u)^2
&0\\
 0
&r(u)^2
\end{pmatrix}.
\]
Since the matrix is diagonal and independent of the parameter $v$, it is easy to find a harmonic vector field. 
Indeed, the divergence and curl operators reduce to
\begin{align*}
 Div (\mathbf F) &= \frac 1{\sqrt g}\du\left(\sqrt{ g}F^1\right) 
 +\dv F^2 , \\
 Curl(\mathbf F) &= \frac 1{\sqrt g}\big(\du\left( G_{vv}F^2\right) 
 - G_{uu}\dv F^1  \big) .
\end{align*}
It is straightforward to verify that 
$\mathbf h_1 = \left(0,\frac{1}{G_{vv}}\right) = \frac{1}{G_{vv}}\dv \vx $ 
is a harmonic vector field, without the need to solve any system of 
equations.
\end{remark}

\section{The Hodge decomposition} \label{hodge}

Suppose now that the method of the previous section
has been used to compute a basis $\{ \mathbf h_1, \mathbf h_2 \}$
for the space of harmonic vector fields
$\calH(\Gamma)$ on a surface of genus one.
Given a tangential vector field $\bj$, we seek its orthogonal
decomposition
\[
\bj = \nabla_{\Gamma} \alpha + \bn \times \nabla_{\Gamma} \beta 
+ \bj_H,
\]
where $\bj_H = c_1 \mathbf h_1 + c_2 \mathbf h_2$,
for unknown scalar functions $\alpha, \beta$ and unknown 
constants $c_1,c_2$.
Taking the surface divergence of either $\bj$ or $\bn \times \bj$,
one obtains the two Laplace-Beltrami equations:
\[ \Delta_\Gamma \alpha = \nabla_\Gamma \cdot \bj, \quad
\Delta_\Gamma \beta =  - \nabla_\Gamma \cdot (\bn \times \bj). \]
The right hand sides can be computed using the methods of 
section \ref{diffgeo}, and these equations can be solved using 
the method of section \ref{surflap}. 
The constants $c_1$ and $c_2$ are easily computed by solving 
the $2 \times 2$ linear system
\[
\begin{pmatrix}
\langle \mathbf h_1, \mathbf h_1 \rangle_\Gamma &
\langle \mathbf h_1, \mathbf h_2 \rangle_\Gamma \\
\langle \mathbf h_2, \mathbf h_1 \rangle_\Gamma &
\langle \mathbf h_2, \mathbf h_2 \rangle_\Gamma \end{pmatrix}
\begin{pmatrix} c_1 \\ c_2 \end{pmatrix}
=
\begin{pmatrix} 
\langle \mathbf h_1, \bj \rangle_\Gamma \\
\langle \mathbf h_2, \bj \rangle_\Gamma 
\end{pmatrix}.
\]

\section{Numerical Examples} \label{numex} 

We illustrate the performance of our methods using a single (non-axisymmetric)
surface.  For this, 
Garabedian coordinates \cite{garabedian_coords,garabedian_stell} 
provide a parametrization of a stellarator geometry - a surface of genus 
$\mathfrak g=1$ with a central curve, referred to as the magnetic axis, 
embedded in the interior. This involves three parameters: the poloidal and 
toroidal angles, as well as a radius-like parameter $s$. The magnetic axis 
is a curve which depends only on the toroidal angle $v$ according to
the equation
\begin{equation}
\left\{
\begin{array}{l}
x(s,u,v) = r_0(v) \cos v , \\
y(s,u,v) = r_0(v) \sin  v , \\
z(s,u,v) = z_0(v),
\end{array}
\right.
\end{equation}
where $r_{0}$ and $z_{0}$ are $2\pi$-periodic functions of $v$.
The {\em outer surface} of the stellarator is defined by a finite number
of trigonometric modes:
\begin{equation}
\left\{
\begin{array}{l}
x(s,u,v) = \sum \Delta_{m,n} \cos ((1-m)u+nv),  \\
y(s,u,v) = \sum \Delta_{m,n} \cos ((1-m)u+nv),  \\
z(s,u,v) = \sum \Delta_{m,n} \sin ((1-m)u+nv), 
\end{array}
\right.
\end{equation}
where $\{ \Delta_{m,n} \}$ are real coefficients.
In between the magnetic axis and the stellarator surface, we have
\begin{equation}
\left\{
\begin{array}{l}
x(s,u,v) = \cos v \Big(r_0(v) + R(s,u,v)\left(\sum \Delta_{m,n} \cos ((1-m)u+nv) - r_0(v)\right)\Big), \\
y(s,u,v) = \sin  v \Big(r_0(v) + R(s,u,v)\left(\sum \Delta_{m,n} \cos ((1-m)u+nv) - r_0(v)\right)\Big), \\
z(s,u,v) = z_0(v) + R(s,u,v)\left(\sum \Delta_{m,n} \sin ((1-m)u+nv) - z_0(v)\right).
\end{array}
\right.
\end{equation}
Note that the radius-like parameter $s$ appears in the scalar ``stretching"
function $R(s,u,v)$. 
Thus, if $R(s,u,v)=0$, then the corresponding point lies on the magnetic axis, 
while if $R(s,u,v)=1$, then the corresponding point lies on the outer surface. 
For fixed value of $s$ between $0$ and $1$, the Garabedian representation 
describes a set of nested surfaces of genus $\mathfrak g = 1$ around the 
magnetic axis - each in the form of the desired parametrization 
\eqref{eq:defX}.
We set $r_{0}(v) = 4.8+0.1 \cos v$,
$z_{0}(v) = 0.1 \sin v$,
$ s = 0.8$,
$R(s,u,v) = s(1+0.01(1-s)\cos u \sin v)$ and the 
coefficients $\Delta_{mn}$ as in Table \ref{fig:Deltas}.
This yields the surface shown in Fig. \ref{fig:geom}.
\begin{table}
\begin{center}
\begin{tabular}[center]{|c||c|c|c|}
\hline
m\textbackslash n & -1 & 0 & 1 \\\hline  \hline
-1 & 0.17 & 0.11&0 \\\hline
0 & 0 & 1 & 0.07 \\\hline
1 & 0 & 4.5 & 0 \\\hline
2 & 0 & -0.25 & - 0.45 \\\hline
\end{tabular}
\end{center}
\caption{Parameters $\Delta_{mn}$ describing the surface.}
\label{fig:Deltas}
\end{table}

\begin{figure}
\begin{center}
\includegraphics[width=.3\textwidth]{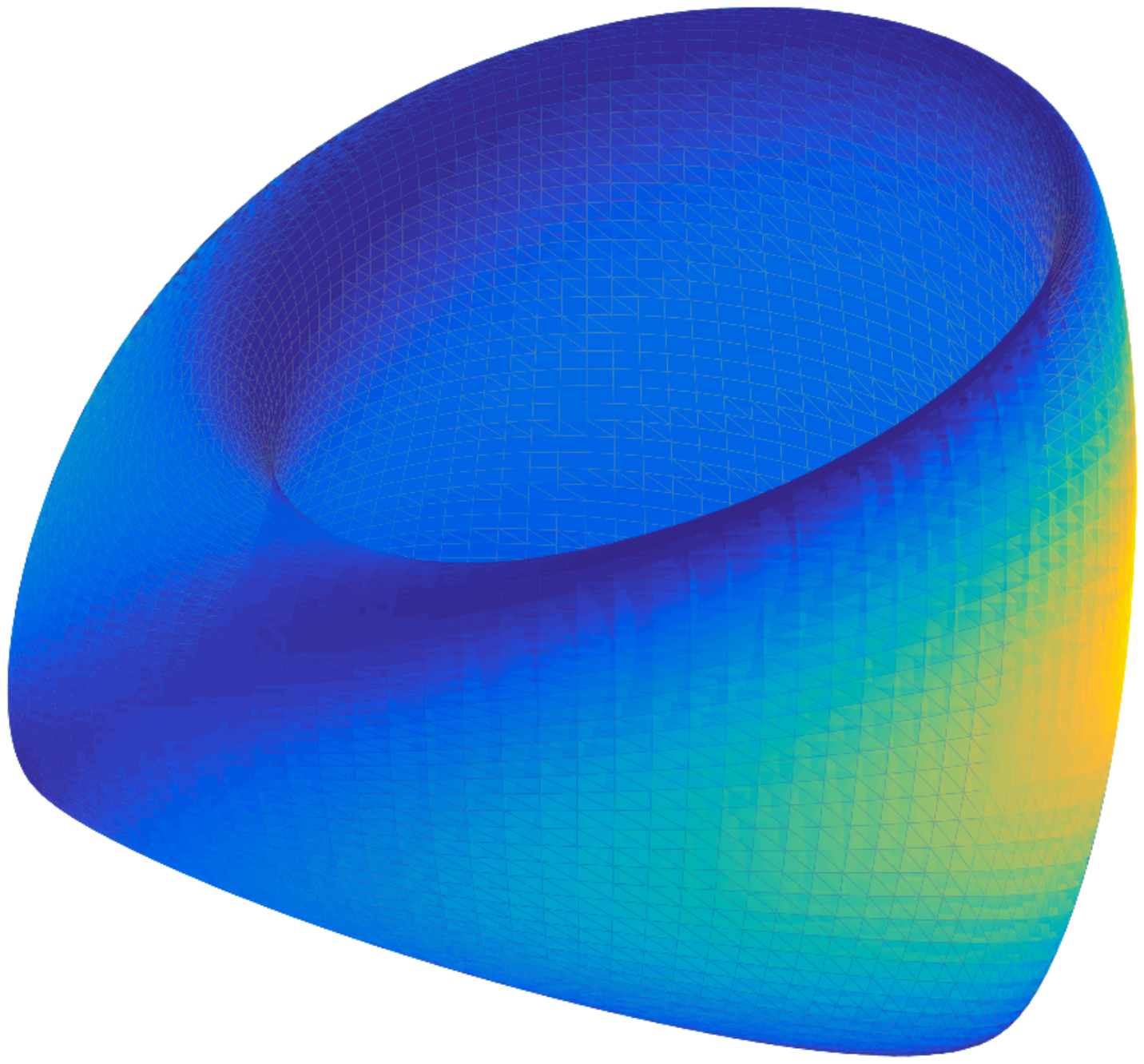}
\includegraphics[width=.35\textwidth]{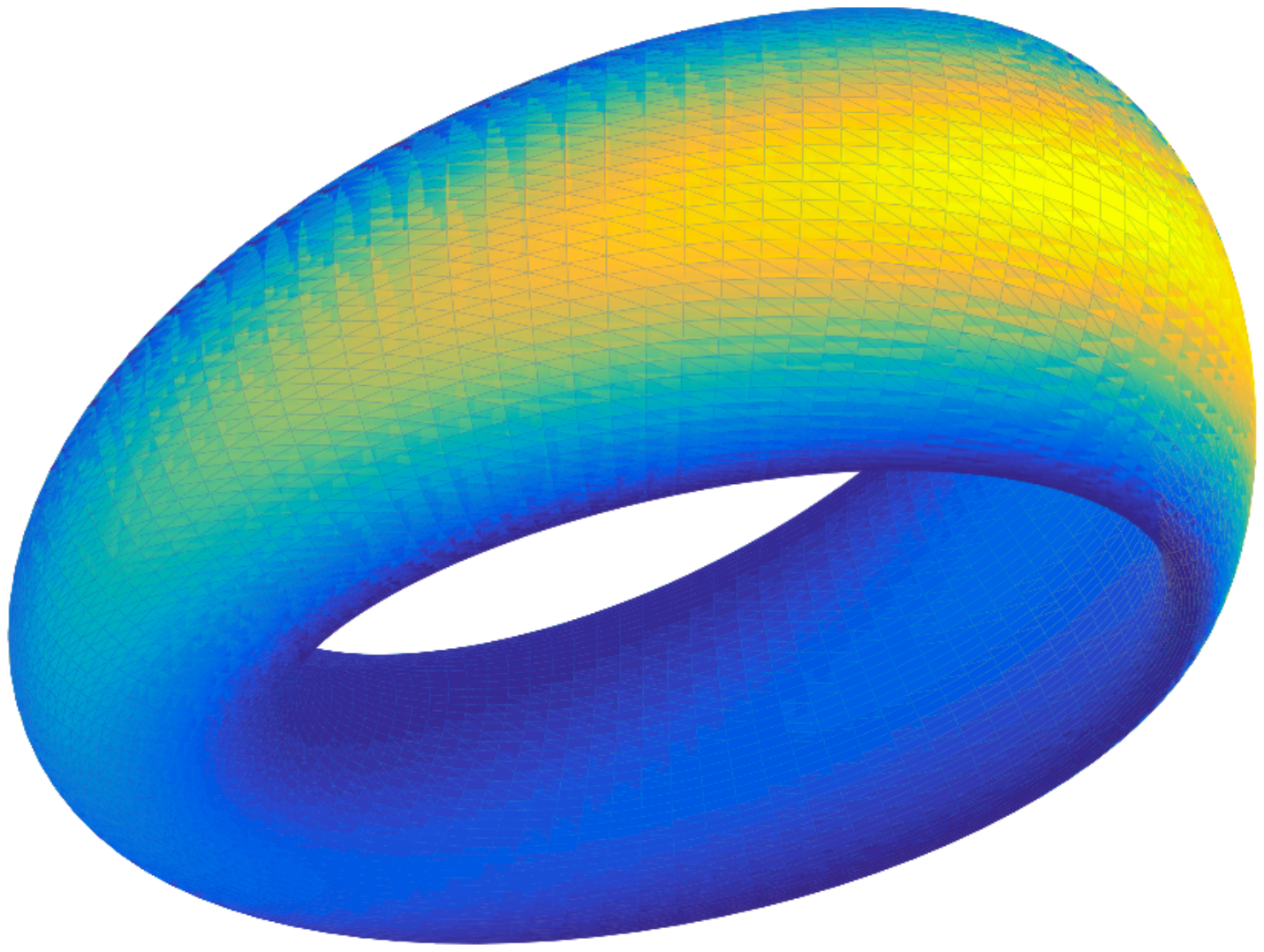}
\end{center}
\caption{Two views of the genus one surface used in our numerical examples.}
\label{fig:geom}
\end{figure}
		
\subsection{The Laplace-Beltrami equation}
In orer to test our Laplace-Beltrami solver, 
we consider the smooth function
$$\psi(u,v) = e^{\cos u+\sin v} + e^{\cos(\kappa(u-v))},$$
and the corresponding mean zero function:
\[ \psi_0 = \psi - 
\frac{\langle \psi,e\rangle_\Gamma}{\langle e ,e\rangle_\Gamma} e \, .
\]
We then compute the corresponding right hand side 
as $b(u,v)=\Delta_\Gamma \psi_0(u,v)$ with
$\kappa = 12$ and solve 
$$
\Delta_{\Gamma,h} \phi = b,
$$
following the method presented in section \ref{surflap}.
Table \ref{tab:LBcv} shows the corresponding convergence results. 
The error is defined as $\| \phi - \psi_0 \|_2$.

\begin{table}
\begin{center}
\begin{tabular}[center]{|c||c|c|c|}
\hline
 N &  Iterations & Time & Error \\
\hline 
         47 &         99 &   0.23E+01 &   0.12E+02 \\
         95 &        125 &   0.97E+01 &   0.36E-02 \\
        191 &        136 &   0.91E+02 &   0.11E-06 \\
        383 &        121 &   0.54E+03 &   0.42E-12 \\
        767 &        114 &   0.86E+03 &   0.44E-12 \\
\hline
\end{tabular}
\end{center}
\caption{Numerical results for the Laplace-Beltrami solver. 
$N$ denotes the number of points used in the uniform discretization in each
component direction (for a total of $N^2$ grid points).
Also shown are the number of BiCG-stab iterations 
\cite{bicgstab}, the solution time and the
$L^2$ error as a function of $N$. }
\label{tab:LBcv}
\end{table}

\subsection{Harmonic vector fields}

We next compute the harmonic vector fields on the surface,
following the method described in section \ref{harmvec}, 
Table \ref{tab:HVFcv} displays the corresponding convergence results. 
\begin{table}
\begin{center}
\begin{tabular}[center]{|c||c|c|c|c|}
\hline
 N & Iterations & Time & Div error & Curl error \\
\hline
         11 &         85 &   0.13E+00 &   0.24E-02 &   0.27E-02\\
         23 &        115 &   0.86E+00 &   0.18E-04 &   0.18E-04\\
         47 &        112 &   0.40E+01 &   0.92E-08 &   0.93E-08\\
         95 &        129 &   0.15E+02 &   0.14E-11 &   0.31E-11\\
        191 &        154 &   0.17E+03 &   0.13E-11 &   0.15E-11\\
        383 &        163 &   0.12E+04 &   0.43E-11 &   0.45E-11\\
        767 &        189 &   0.20E+04 &   0.47E-10 &   0.48E-10\\
\hline
\end{tabular}
\end{center}
\caption{Numerical results for the computation of harmonic vector fields. 
$N$ denotes the number of points used in the uniform discretization in each
component direction (for a total of $N^2$ grid points).
Also shown are the number of BiCG-stab iterations 
\cite{bicgstab}, the solution time and the
$L^2$ error in the surface divergence and surface curl as a function of $N$.}
\label{tab:HVFcv}
\end{table}

\subsection{The Hodge decomposition}

Finally, we consider a vector field in the ambient three-dimensional space
given by
$\mathbf j (\mathbf x)=  \nabla \left(\sin {\imath \kappa \mathbf k \cdot \mathbf x}\right) + \mathbf n \times\nabla \left(\sin {\imath \kappa \mathbf k \cdot \mathbf x}\right) $  
with $\kappa=10$ and $\mathbf k = (0,0,1)$. We define its tangential component $\mathbf j_T$ on the stellarator surface, and
compute its Hodge decomposition using the method presented in section \ref{hodge}. 
This requires solving two Laplace-Beltrami equations to compute the 
curl-free component ($\nabla_\Gamma \alpha$) and the divergence free  component
($\bn \times \nabla_\Gamma \beta$). It also requires
a basis for the harmonic vector fields to determine the harmonic 
component $\bj_H$.
Table \ref{tab:HDcv} presents the corresponding convergence results. 
The error is computed using the $L_2$ norm induced by the inner product
for tangential vector fields in \eqref{l2vecdef}, which we denote by 
$\| \mathbf j_{T} - \nabla_\Gamma \alpha - \bn \times \nabla_\Gamma \beta
- \bj_H \|_2$.
The vector field and its various components are shown in Figure \ref{fig:HD}.

\begin{table}
\begin{center}
\begin{tabular}[center]{|c||c|c|c|}
\hline
N & Iterations & Time & Error \\\hline 
         47 &        112 &   0.12E+02 &   0.21E+00 \\
         95 &        129 &   0.62E+02 &   0.26E-05 \\
        191 &        154 &   0.51E+03 &   0.25E-11 \\
        383 &        163 &   0.36E+04 &   0.42E-13 \\
        767 &        189 &   0.10E+05 &   0.38E-13 \\
        \hline
\end{tabular}
\end{center}
\caption{Numerical results for the Hodge decomposition. 
$N$ denotes the number of points used in the uniform discretization in each
component direction (for a total of $N^2$ grid points).
Also shown are the number of BiCG-stab iterations 
\cite{bicgstab}, the time to find a basis for the harmonic vector fields and
solve the two Laplace-Beltrami equations, and the
$L^2$ error in the reconstructed vector field as a function of $N$.}
\label{tab:HDcv}
\end{table}


\begin{figure}
\begin{center}
\includegraphics[width=.49\textwidth,trim={4cm 9cm 4cm 9cm},clip]{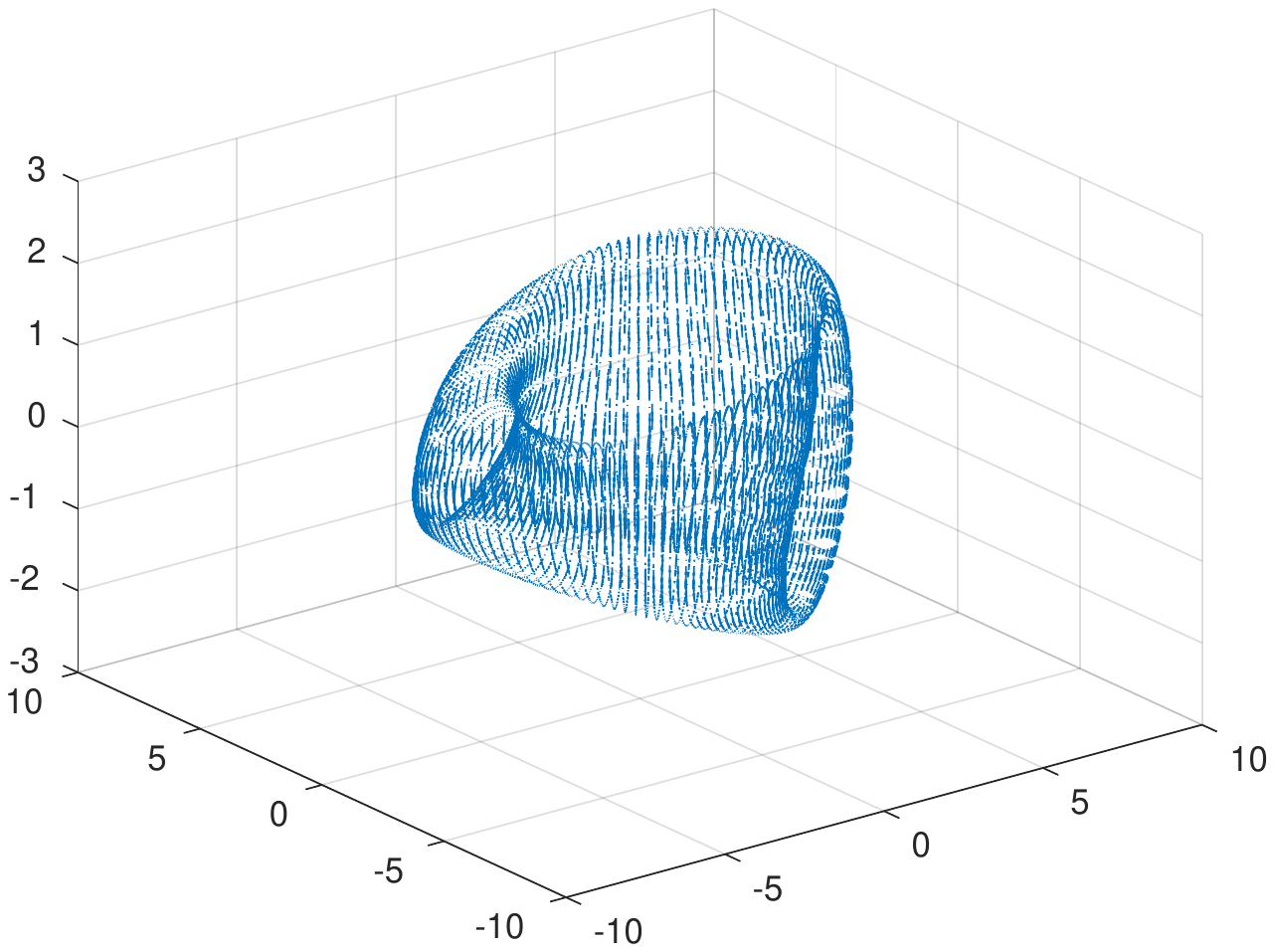}
\includegraphics[width=.49\textwidth,trim={4cm 9cm 4cm 9cm},clip]{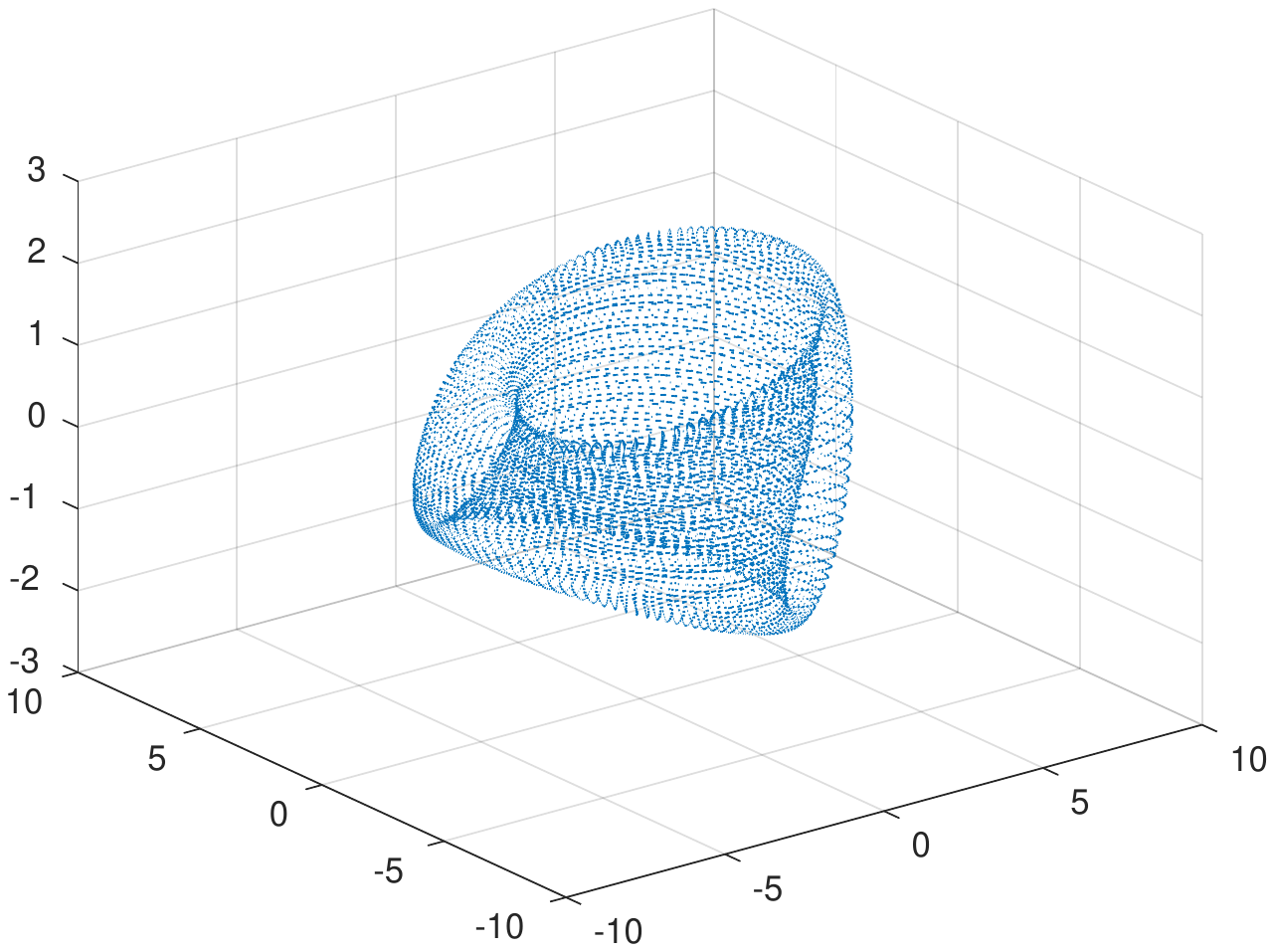}
\end{center}
\begin{center}
\includegraphics[width=.49\textwidth,trim={4cm 9cm 4cm 9cm},clip]{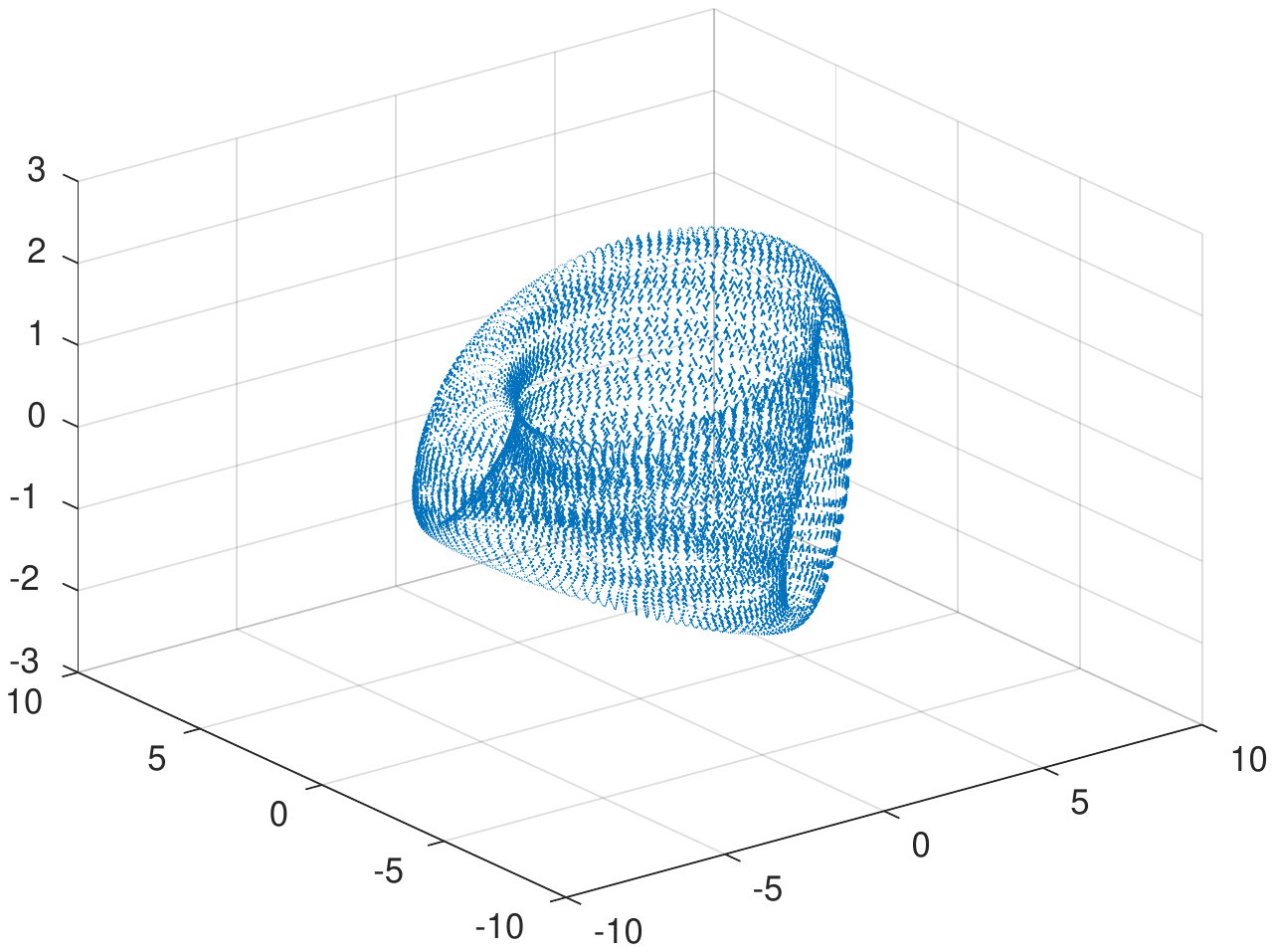}
\end{center}
\caption{The various components of the Hodge decomposition of the vector field $\bj(\mathbf x)=  \nabla \left(\sin {\imath \kappa \mathbf k \cdot \mathbf x}\right) + \mathbf n \times\nabla \left(\sin {\imath \kappa \mathbf k \cdot \mathbf x}\right)$ : its curl-free component $\bn \times \nabla_\Gamma \beta$ (top left),  
 its divergence free component $\nabla_\Gamma \alpha$ (top right), 
and its harmonic component $\bj_H$ (bottom). The vector fields are rescaled for clarity.}
\label{fig:HD}
\end{figure}

\section{Conclusions} \label{discussion} 

We have presented a simple but effective pseudo-spectral method for solving
the Laplace-Beltrami equation on surfaces of genus one, and for determining a 
basis for surface harmonic vector fields. This permits the fast and 
accurate computation of the full Hodge decomposition of a given smooth
tangential vector field. Our method relies on the FFT and a ``flat torus" 
preconditioner, requiring $O(N^2 \log N)$ work where $N^2$ is the number of 
points in the surface representation.

There are several drawbacks to our scheme: it relies on a global
parametrization, it is inherently non-adaptive, and it is limited in its 
present form to surfaces of genus one. We are currently exploring variants of
our approach that will be able to handle these generalizations.

\bibliographystyle{abbrv}
\bibliography{ImbertgerardGreengardHVFArxiv}

\end{document}